\newtheorem{theorem}{Theorem}[section]
\newtheorem{lemma}[theorem]{Lemma} 
\newtheorem*{lemma*}{Lemma}
\newtheorem{claim}[theorem]{Claim}
\theoremstyle{remark}
\theoremstyle{definition}
\newtheorem{problem}{Problem}
\newcommand{\vass}[1]{\left|#1\right|}
\newcommand{\llb}{\left\lbrace}
\newcommand{\lfl}{\left\lfloor}
\newcommand{\rrb}{\right\rbrace}
\newcommand{\rfl}{\right\rfloor}
\newcommand{\eps}{\varepsilon}
\renewcommand{\epsilon}{\varepsilon}
\renewcommand{\Pr}[1]{\mathbb{P}\left(#1\right)}
\newcommand{\Ex}[1]{\mathbb{E}\left(#1\right)}
\newcommand{\floor}[1]{\left\lfloor#1\right\rfloor}
\newcommand{\se}{\subseteq}
\newcommand{\oldqed}{}
\def\endofClaim{\hfill\scalebox{.6}{$\blacksquare$}}
\title{On product Schur triples in the integers}
\author{Let\'icia Mattos}
\address{Department of Mathematical Sciences, University of Illinois at Urbana-Champaign, Urbana, Illinois
61801, USA (L.~Mattos)}\email{lmattos@illinois.edu}
\author{Domenico Mergoni Cecchelli}
\address{London School of Economics, Department of Mathematics, Houghton Street, London WC2A 2AE, UK (D.~Mergoni Cecchelli)}\email{d.mergoni@lse.ac.uk}
\author{Olaf Parczyk}
\address{Department of Mathematics and Computer Science,  Freie Universität Berlin, Arnimallee~3, 14195 Berlin, Germany (O.~Parczyk)}\email{parczyk@mi.fu-berlin.de }
\thanks{During most of this work, L.~Mattos and O.~Parczyk were supported by the Deutsche Forschungsgemeinschaft (DFG, German Research Foundation) under Germany's Excellence Strategy -- The Berlin Mathematics Research Center MATH+ (EXC-2046/1, project ID: 390685689). D. Mergoni Cecchelli has been partially supported by The London Mathematical Society with Grant Scheme 7, Ref No: SC7-2122-01.}
\begin{document}
	
	\begin{abstract}
	
    Schur's theorem states that in any $k$-colouring of the set of integers $[n]$ there is a monochromatic solution to $a+b=c$, provided $n$ is sufficiently large.
    Abbott and Wang studied the size of the largest subset of $[n]$ such that there is a $k$-colouring avoiding a monochromatic $a+b=c$.
    This led to the exploration of related problems, such as the minimum number of monochromatic $a+b=c$ in $k$-colourings of $[n]$ and the probability threshold for a random subset of $[n]$ to have a monochromatic $a+b=c$ in any $k$-colouring.
	In this paper, we study natural generalisations of these problems to products $ab=c$, in deterministic, random, and randomly-perturbed environments.

	\end{abstract}
\maketitle
	\section{Introduction}
	
	We say that an ordered triple of positive integers $(a,b,c)$, not necessarily distinct, forms a \emph{Schur triple} if $a+b=c$ \footnote{Sometimes in the literature, e.g. Schoen \cite{schoen1999number}, a Schur triple is defined as a set $\{a,b,c\}$ such that $a+b=c$, which does not influence existence statements, but changes the counting, as by our definition the triples $(2,3,5)$ and $(3,2,5)$ are distinct while being represented by the same set $\{2,3,5\}$.}.
	The \emph{Schur number} $S(k)$ is defined as the minimum $n \in \mathbb{N}$ such that for every $k$-colouring of the integers $[n]:=\{1,\ldots,n\}$ there exists a monochromatic Schur triple\footnote{We warn the reader that some authors, e.g. Heule \cite{heule2018schur}, define $S(k)$ as the maximum $n$ such that there exists a $k$-colouring of $[n]$ with no monochromatic Schur triple.}.
	In 1917, Schur~\cite{Schur1917} proved that
	\[\dfrac{3^k+1}{2} \le S(k) \le \lfloor k! e \rfloor.\]
	While the upper bound was later improved to $\lfloor k! (e-\frac{1}{24}) \rfloor$ by Irving~\cite{Irving1973}, determining the exact asymptotics of $S(k)$ has proven to be a difficult problem.
	
	In 1977, Abbott and Wang \cite{AbbottWang1977} introduced a natural problem related to Schur numbers.
	For this, we say that a set $A$ is \emph{$k$-Schur} if every $k$-colouring of $A$ contains a monochromatic Schur triple. 
	In~\cite{AbbottWang1977}, Abbott and Wang posed the question of determining the size of the largest set $A\se [n]$ which is not $k$-Schur, denoted by $g(k,n)$. They showed that 
	\[g(k,n) \geq n-\left \lfloor \dfrac{n}{H(k)} \right \rfloor,\]
    where $H(k)$ is the smallest integer such that every $k$-colouring of $[H(k)]$ contains a monochromatic Schur triple modulo $H(k)+1$.
    Moreover, they conjecture that this is tight and that $S(k) = H(k)$, which together gives the conjecture that $g(k,n) = n - \lfloor n/S(k) \rfloor$.
    While this area has received considerable interest, the conjecture remains in general open.
	For one colour, by considering the size of the set $\{\max(A)-a: a\in A\}$, which is disjoint from $A$, we can conclude that $g(1,n) = \lceil n/2 \rceil$, which proves the conjecture for $k=1$.
	For two colours, the following construction shows that $g(2,n) \ge \lceil 4n/5 \rceil$.
	Let $A$ be the set of all numbers in $[n]$ which are not divisible by 5. In $A$, colour the numbers congruent to 1 or 4 mod 5 red and the remainder blue. One can check that $A$ is a set of size $\lceil 4n/5 \rceil$ and that there is no monochromatic Schur triple in $A$.
	In 1980, Hu \cite{Hu1980} showed that this is the best possible, and hence established the Abbott--Wang Conjecture for two colours. 
	For three or more colours, while similar constructions with modular arithmetic are easy to build and are believed to be optimal, the problem remains open.
	
	In this paper, we introduce product Schur triples and we study what we feel is a natural variation of the Abbott--Wang Conjecture and of Schur's theorem in both the deterministic and the random settings.
    Our work is partially motivated by a question of Prendiville \cite{Prendiville} (see Problem \ref{prob:Prendiville} in the concluding remarks section).
    
    We say that an ordered triple $(a,b,c)$ of positive integers, not necessarily distinct, forms a \emph{product Schur triple} if $ab=c$. A set of numbers is said to be \emph{product-free} if there are no product Schur triples consisting uniquely of elements of that set. 
    For product Schur triples, we can ask similar questions to the ones that have been studied for Schur triples. To start, note that if $n$ is large enough, then every $k$-colouring of $[2,n]$ contains a monochromatic product Schur triple; to see this, we can use the fact that $2^{a}\cdot 2^{b}=2^{a+b}$, paired with Schur's theorem applied to the set of powers of $2$ in $[2,n]=\{2, 3,\dots{},n\}$\footnote{We might abuse notation and denote the set $[2,n]$ with $[n]$.}.
    
    Here and in the following we only focus on product-free subsets of $[2,n]$, as $(1,1,1)$ and triples of the form $(1,a,a)$ and $(a,1,a)$ are trivial product Schur triples that we do not wish to include in our counting results\footnote{Similarly to how $0$ is excluded in the literature when counting Schur triples.}.
 
    \subsection{The deterministic setting}
    
    For a given positive integer $k$, and $n$ large enough, we define $g_*(k,n)$ to be the smallest size $s$ such that for any given $A\subseteq [2,n]$ of size at least $s$ and for every $k$-colouring of $A$, the colouring contains a monochromatic product Schur triple.
    Our first result provides upper and lower bounds on $g_*(k,n)$.
    Our bounds depend on Schur numbers and on what we call the \emph{double-sum Schur number}, which was previously investigated by Abbott and Hanson \cite{abbott1972problem} under their analysis of what they called \emph{strongly sum-free sets}.
    We call the \emph{double-sum Schur number} $S'(k)$ the minimum $n \in \mathbb{N}$ such that for every $k$-colouring of $[n]$ there exists a monochromatic solution\footnote{Here and in the following, given a coloured set $X$ and an equation (e.g. $a+b=c$), we call a monochromatic solution of $a+b=c$ in $X$ an ordered tuple $(x_1, x_2, x_3)$ of elements of $X$ with the same colour such that $x_1+x_2=x_3$ (i.e. that the elements respect the equation in their order). E.g. in a $1$-colouring of $[5]$, both $(2,3,5)$ and $(3,2,5)$ are solutions to $a+b=c$, while $(3,5,2)$ is not.} of $a+b=c$ or $a+b=c-1$.

	\begin{theorem}\label{thm:det_prod}\label{prop:det_prod}
    Let $\eps >0$, and let $k$ be a positive integer.
    For every $n>(\frac{2}{\eps})^{S(k)^2}$ we have 
		\[n-n^{1/S'(k)}\le g_*(k,n) \le n-(1-\eps) n^{1/S(k)}.\]
	\end{theorem}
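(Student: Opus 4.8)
My plan is to prove the two inequalities separately, with the Schur number $S(k)$ governing the upper bound and the double-sum Schur number $S'(k)$ the lower bound. Throughout it is convenient to read $g_*(k,n)$ as the largest size of a set $A\se[2,n]$ that admits a $k$-colouring with no monochromatic product Schur triple: the lower bound then asks me to produce one large such set, and the upper bound asks me to show that every sufficiently large subset of $[2,n]$ fails to have such a colouring. The lower bound construction is a ``delete a short initial segment and colour by logarithmic blocks'' argument; the upper bound is a ``many pairwise disjoint power gadgets'' argument.

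For the lower bound I would set $r=\floor{n^{1/S'(k)}}+1$ and delete the initial segment $[2,r-1]$, leaving $A=[r,n]$ of size $n-r+1\ge n-n^{1/S'(k)}$. Put $M=\floor{\log_r n}$; since $r>n^{1/S'(k)}$ we get $M\le S'(k)-1$, so by definition of $S'(k)$ there is a $k$-colouring $\phi\colon[M]\to[k]$ with no monochromatic solution of $i+j=\ell$ or $i+j=\ell-1$. I then colour each $m\in A$ by $\phi(\floor{\log_r m})$, noting $\floor{\log_r m}\in[M]$. The point is that if $a,b,c\in A$ satisfy $ab=c$ with $a$ in the block $[r^i,r^{i+1})$ and $b$ in $[r^j,r^{j+1})$, then $c=ab\in[r^{i+j},r^{i+j+2})$, so the block index $\ell$ of $c$ equals $i+j$ or $i+j+1$; a monochromatic such triple would be a monochromatic solution of $i+j=\ell$ or $i+j=\ell-1$ for $\phi$, which is impossible. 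Hence $A$ carries the desired colouring and $g_*(k,n)\ge n-n^{1/S'(k)}$.

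For the upper bound I would build pairwise disjoint \emph{power gadgets}. Call $x\ge2$ \emph{primitive} if it is not a perfect power. For each primitive $x$ with $2\le x\le n^{1/S(k)}$ put $t_x=\floor{\log_x n}\ge S(k)$ and $P_x=\{x,x^2,\dots,x^{t_x}\}\se[2,n]$. Any $k$-colouring of $P_x$ transfers to the exponents $[t_x]\supseteq[S(k)]$, where the definition of $S(k)$ yields a monochromatic triple $i+j=\ell$, so $x^i x^j=x^\ell$ is a monochromatic product Schur triple; thus $P_x$ forces one in any $k$-colouring that includes it. Moreover, for primitive $x\neq y$ one has $P_x\cap P_y=\emptyset$: an equality $x^a=y^b$ would force, by unique factorisation, $x=w^{b'}$ and $y=w^{a'}$ with $w\ge2$ and $\gcd(a',b')=1$, and primitivity forces $a'=b'=1$, i.e.\ $x=y$. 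The number of primitive $x$ in $[2,n^{1/S(k)}]$ is $\floor{n^{1/S(k)}}-1$ minus the number of perfect powers there, which is $O(n^{1/(2S(k))})$; a routine computation shows this is at least $(1-\eps)n^{1/S(k)}$ once $n>(2/\eps)^{S(k)^2}$. Then, if $A\se[2,n]$ has $|A|>n-(1-\eps)n^{1/S(k)}$, the complement $[2,n]\setminus A$ has fewer than $(1-\eps)n^{1/S(k)}$ elements, so by disjointness it misses some $P_x$ entirely; hence $P_x\se A$ and every $k$-colouring of $A$ contains a monochromatic product Schur triple, giving $g_*(k,n)\le n-(1-\eps)n^{1/S(k)}$.

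The step I expect to need the most care is securing $\Omega(n^{1/S(k)})$ \emph{pairwise disjoint} gadgets: the naive choice of prime bases produces only about $n^{1/S(k)}/\log n$ of them, which is not enough, so I must use all primitive bases and verify disjointness through unique factorisation, checking that perfect powers cost only a lower-order $O(n^{1/(2S(k))})$ term (this is where the threshold $n>(2/\eps)^{S(k)^2}$ is consumed). On the lower-bound side the only subtlety is the observation that a product of two logarithmic blocks spills across two consecutive blocks, which is exactly what forces the ``$i+j=\ell-1$'' clause and makes $S'(k)$, rather than $S(k)$, the relevant parameter.
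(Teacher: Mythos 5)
Your proposal is correct and follows the same two-pronged architecture as the paper's proof: the lower bound via a logarithmic-block colouring of everything above roughly $n^{1/S'(k)}$ (your base-$r$ floor indexing is the paper's base-$n$ ceiling indexing in different clothes, and you correctly isolate the spill into two consecutive blocks as the reason $S'(k)$, not $S(k)$, appears), and the upper bound via many pairwise disjoint geometric-progression gadgets $\{x,x^2,\dots\}$ whose exponents inherit a monochromatic Schur triple, followed by the same pigeonhole against the complement of $A$. The one genuine divergence is how disjointness of the gadgets is secured. The paper restricts the bases to the window $[\tfrac12\eps n^{1/S(k)},n^{1/S(k)}]$ and argues that for distinct bases and exponents $i<j\le S(k)$ the fractional power $b^{j/i}$ exceeds $n^{1/S(k)}$, a metric argument that consumes the full strength of the hypothesis $n>(2/\eps)^{S(k)^2}$; you instead take all non-perfect-power bases in $[2,n^{1/S(k)}]$ and obtain disjointness arithmetically from unique factorisation (if $x^a=y^b$ with $x,y$ primitive then $x=y$), using the hypothesis only to show $\eps n^{1/S(k)}$ dominates the $O(n^{1/(2S(k))})$ perfect powers plus rounding. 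That counting step is indeed routine and does go through under the stated threshold (the worst case is $S(k)=2$, where $n>(2/\eps)^{4}$ gives $\eps n^{1/2}>2\sqrt{n^{1/2}}$, comfortably above the perfect-power count), but since the theorem carries an explicit threshold you should write out this estimate rather than leave it as ``routine''. Both routes produce at least $(1-\eps)n^{1/S(k)}$ disjoint gadgets, after which the arguments coincide.
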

	
	The proof of Theorem~\ref{thm:det_prod} is given in Section~\ref{sec:det}. As $S'(k)=S(k)$ for $k \in \{1,2,3\}$ (we have $S(1)=2,\ S(2)=5,\ S(3)=14$), Theorem~\ref{thm:det_prod} is optimal for these values. For $k=4$, we have $S'(4)=41<45=S(4)$, while for $k>4$ precise values of $S'(k)$ are not known and values of $S(k)$ have proved extremely difficult to calculate precisely \cite{heule2018schur}.
	
	If, instead of seeking the minimum size that forces a subset of $[n]$ to contain a monochromatic Schur triple in any $k$-colouring, we consider $k$-colourings in $[n]$ which minimise the number of monochromatic Schur triples, we recover a problem that was first investigated in 1996 by Graham, R\"odl and Ruci\'nski~\cite{GrahamRodlRucinski1996}. They used Goodman's result to show that there are at least $n^2/19+O(n)$ monochromatic Schur triples in any 2-colouring of $[n]$\footnote{Note that here and in the following paragraph, by the definition of Schur triple that we follow, the result we report are shifted by a factor $2$, as the authors considered Schur triples as sets and not ordered triples.}. 
	In the late 1990s, Schoen~\cite{schoen1999number} and, independently, Robertson and Zeilberger~\cite{Robertson1998} improved this bound to $n^2/11+O(n)$. This is best possible, as the colouring $[n]=R\cup B$ given by $R=(\frac{4n}{11},\frac{10n}{11}]$ and $B = [n]\setminus R$ contains $n^2/11+O(n)$ monochromatic solutions for $a+b=c$.
	Similar results for larger values of $k$ are not known and would be of great interest.

    In the same vein as the results of Schoen~\cite{schoen1999number} and Robertson and Zeilberger~\cite{Robertson1998}, Prendiville \cite{Prendiville} asked for the minimum number of monochromatic product Schur solutions in every 2-colouring of $[2,n]$.
    Our next theorem shows that it is at least $n^{1/3-o(1)}$.
    
	\begin{theorem}\label{thm:det_prod_mult}
		For every $\varepsilon>0$ there exists $n_0(\eps)\in \mathbb{N}$ such that the following holds for all $n \ge n_0(\eps)$. Every $2$-colouring of $[2,n]$ contains at least $n^{1/3-\varepsilon}$ monochromatic product Schur triples.
	\end{theorem}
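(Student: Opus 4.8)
The plan is to count monochromatic product Schur triples that can be ``charged'' to individual small primes. Concretely, I would fix attention on the primes $q$ in the dyadic range $(n^{1/4},n^{1/3}]$ --- there are $\pi(n^{1/3})-\pi(n^{1/4})\sim 3n^{1/3}/\ln n$ of them --- and show that each such $q$ is responsible for at least one monochromatic product Schur triple, in such a way that the triples attached to distinct primes are distinct. This immediately gives at least $\pi(n^{1/3})-\pi(n^{1/4})>n^{1/3-\eps}$ monochromatic triples once $n\ge n_0(\eps)$, which is the theorem. (Here $S'(2)=S(2)=5$ play no role; one cannot expect to do better than $n^{1/3}$ just from counting the contributions of these primes, but that is all we need.)

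For a fixed $q$ I would work inside the multiplicative sub-semigroup of $[2,n]$ generated by $2$ and $q$, namely $G_q=\{2^aq^b: a,b\ge 0,\ (a,b)\neq(0,0),\ 2^aq^b\le n\}$. Recording an element by its exponent pair $(a,b)$, a product Schur triple inside $G_q$ is precisely an additive triple $(a,b)+(a',b')=(a'',b'')$ of admissible pairs, so that taking logarithms turns the question into an additive one in the lattice region $\{(a,b)\ge 0: a\log 2+b\log q\le \log n\}$. Group $G_q$ into \emph{rows} by the value of $b$. Row $0$, the powers of $2$, is the \emph{same} set for every $q$, so charging a triple to $q$ means finding a monochromatic triple touching some row $b\ge1$ (equivalently, one whose three elements are not all powers of $2$). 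For $q\in(n^{1/4},n^{1/3}]$, row $0$ has length $\approx\log_2 n$, row $1$ (the $2^aq$) has length $\ge\tfrac23\log_2 n$, and row $2$ (the $2^aq^2$) has length $\ge\tfrac13\log_2 n$; only rows $0,1,2$ will be used.

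The heart of the matter is the claim: \emph{every $2$-colouring of rows $0,1,2$ contains a monochromatic additive Schur triple of one of the three cross types} (B) $2^a\cdot 2^{a'}q=2^{a+a'}q$, (C) $2^aq\cdot 2^{a'}q=2^{a+a'}q^2$, (D) $2^a\cdot 2^{a'}q^2=2^{a+a'}q^2$. I would prove this by contradiction: suppose every triple of types B, C and D is non-monochromatic. Absence of type-D triples forces the colouring of row $2$ to be \emph{nearly constant} --- otherwise both colour classes of row $2$ realise some common small difference $i$ and then $2^i$ could be coloured neither red nor blue --- so row $2$ has a majority colour $\gamma_2$, and type D then pins the colour of $2^i$ to be $\overline{\gamma_2}$ for essentially all $i$ in an interval of length $\ge\tfrac13\log_2 n$. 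The same analysis applied to type B forces a majority colour on row $1$ and pins the colour of $2^i$ on an interval of length $\ge\tfrac23\log_2 n$; comparing the two conclusions on the common interval, the majority colours of rows $1$ and $2$ must coincide, say to $\gamma$. But row $1$ being almost entirely of colour $\gamma$ makes almost all pairs in row $1$ monochromatic in colour $\gamma$, so type C would require row $2$ to be almost entirely of colour $\overline\gamma$ on an interval of length $\ge\tfrac13\log_2 n$ --- contradicting that its majority colour is $\gamma$. With a little more bookkeeping each dichotomy is quantitative and in fact yields $\gtrsim(\log n)^2$ monochromatic cross-row triples, though one suffices here. This claim is, in essence, a Rado-type partition-regularity statement for a coupled system of three linear equations whose variables are pre-coloured according to which row they inhabit; turning the informal ``nearly constant, or else many monochromatic triples'' steps into rigorous estimates (via a Cauchy--Davenport / difference-set argument) is the step I expect to be the main obstacle.

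Granting the claim, every prime $q\in(n^{1/4},n^{1/3}]$ contributes a monochromatic product Schur triple having an element divisible by $q$. Such triples are pairwise distinct across primes, since from an element $2^aq^b$ with $b\ge 1$ one recovers $q$ (the prime factor of its odd part); and for a fixed $q$, the types B, C, D yield distinct triples because their multisets of $q$-exponents are $\{0,1,1\}$, $\{1,1,2\}$, $\{0,2,2\}$. Hence the total number of monochromatic product Schur triples in $[2,n]$ is at least $\pi(n^{1/3})-\pi(n^{1/4})$, which exceeds $n^{1/3-\eps}$ for all $n\ge n_0(\eps)$.
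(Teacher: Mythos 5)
Your route is genuinely different from the paper's, and it does work. The paper argues globally: a supersaturation lemma (any $A\subseteq[2,n]$ missing at most $\tfrac12\sqrt n$ elements contains $\Omega(n)$ solutions of $ab=c$) lets one assume both colours occupy at least $\tfrac12 n^{1/6}$ elements of $[n^{1/3}]$; the divisor bound of Wigert then gives at least $n^{1/3-\eps/2}$ products of two red ``small'' numbers, at least half of which must be blue (and symmetrically for blue pairs), and a short case analysis shows that any red pair $(a_1,b_1)$ with $a_1b_1$ blue combined with any blue pair $(a_2,b_2)$ with $a_2b_2$ red forces a monochromatic triple with $c=a_1b_1a_2$ or $c=a_2b_2a_1$; dividing by the multiplicity $|R'|$ gives $\tfrac14 n^{1/3-\eps}$. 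You instead localise to the semigroups $\{2^aq^b\}$ for the roughly $3n^{1/3}/\log n$ primes $q\in(n^{1/4},n^{1/3}]$, prove a Schur/Rado-type statement for the three rows, and sum over $q$; this needs neither supersaturation nor divisor bounds, and it even yields the marginally better count $\Omega(n^{1/3}/\log n)$. Your charging and distinctness bookkeeping across primes is fine.

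The one step you defer --- the three-row claim --- is true, and the ``difference-set'' dichotomy you anticipate can be closed by an elementary argument, so there is no fatal gap, but it should be written out. The needed lemma is: if an interval of exponents of length $L\ge 4$ is $2$-coloured and both colour classes have at least three elements, then both classes realise a common difference (if neither colour has two adjacent elements the colouring alternates and both realise $2$; if, say, blue has no adjacent pair, one first rules out two blues at distance $2$, and then with blues $b_1<b_2<b_3$ pairwise at distance $\ge 3$ the difference $b_2-b_1$ is also realised by the red pair $b_1+1,\,b_2+1$). Since a common difference $d$ in row $1$ (resp.\ row $2$) would leave $2^d$ with no legal colour by type B (resp.\ D), each of rows $1,2$ has a minority class of size at most $2$; type B and type D then pin $\chi(2^d)$ to the colour opposite the respective majority for all $1\le d\le L_2-4$, so the two majorities agree, say both equal $\gamma$; finally, sums $a+a'\le L_2$ of $\gamma$-coloured row-$1$ exponents cover all but $O(1)$ of $[0,L_2]$, and type C forces all of these row-$2$ positions to be $\bar\gamma$, contradicting that row $2$ has at most two non-$\gamma$ elements. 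Two boundary points to keep straight when you write this up: exponent $0$ is available in rows $1$ and $2$ but not in row $0$ (since $1\notin[2,n]$), and $a=a'$ is allowed in type C because product Schur triples permit $a=b$.
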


    Whilst writing the paper, Aragão, Chapman, Ortega and Souza~\cite{ACOS23} showed, among other results, that any $2$-colouring of $[2,n]$ has at least $(2^{-3/2}-o(1))n^{1/2}\log(n)$ monochromatic integer solutions to ${ab=c}$, which is sharp up to lower order terms.
	In general, we believe that the number of monochromatic product Schur triples in any $k$-colouring of $[2,n]$ should depend on Schur numbers.
    We observe that the $k$-colouring on $[2,n]$ that provides the lower bound on $g_*(k-1,n)$ in Theorem~\ref{thm:det_prod} yields $O(n^{1/S'(k-1)} \log(n))$ monochromatic product Schur triples, which matches the lower bound of Aragão, Chapman, Ortega and Souza~\cite{ACOS23} up to $\log$ factors also for $k=3,4$.
	Theorems~\ref{thm:det_prod} and~\ref{thm:det_prod_mult} are proven in Section~\ref{sec:det}.
	
	\subsection{Product Schur triples in random sets} 
	
	Amongst the subsets of $[n]$ which do not contain a Schur triple, there are exactly two which attain the maximal size of $\lceil n/2 \rceil$. These are the set of the odd numbers in $[n]$ and the set $(n/2,n]\cap \mathbb{N}$. This was already known to Cameron and Erd\H{o}s~\cite{CameronErdos}. Therefore, a `typical' set of size $n/2$ contains a Schur triple.
	From a probabilistic point of view, it is natural to go one step further and investigate for which densities a typical random set contains a Schur triple.
	In order to formalise what we mean by this, we first need some preparation.
	
	For a set $A \subseteq \mathbb N$, let $A_p$ be the random set that is obtained by including each element of $A$ independently at random, each with probability $p$.
	For a collection $\mathcal{P}$ of subsets of $\mathbb{N}$, usually referred to as a property, we say that a function $\hat{p}: \mathbb{N} \to [0,1]$ is a \emph{threshold} for $\mathcal{P}$ in $A$ if
	\[\Pr{A_p \in \mathcal{P}} \to \begin{cases} 0, \text{ if } p \ll \hat{p};\\ 1, \text{ if } p \gg \hat{p}.\end{cases}\]
	Above, the notation $p \ll \hat{p}$ stands for $p = o(\hat{p})$.
	A well-celebrated result of Bollob\'as and Thomason~\cite{bollobas1987threshold} states that thresholds exist for non-trivial monotone properties. Moreover,
	if $\hat{p}_\alpha$ and  $\hat{q}_\alpha$ are both thresholds for a property $\mathcal{P}$, then $\hat{p}_\alpha = \Theta (\hat{q}_\alpha)$.
	For this reason, we abuse notation and refer to a threshold function as \emph{the} threshold function, when $A$ and $\mathcal{P}$ are clear from the context.
	
	A routine application of the first and second moment methods shows that the threshold in $[n]_p$ to contain a Schur triple is $n^{-2/3}$. It is natural to expand this result and ask about the threshold for $[n]_p$ to contain solutions for certain equations. 

    Since the set of product Schur triples in $[2,n]$ is extremely sparse, we do not expect a direct application of the hypergraph container method or the second-moment method to work. 
	Nevertheless, we are able to overcome this problem and find the threshold for $[2,n]_p$ to contain a solution for $ab=c$.
    This is the first result on solutions of nonlinear equations in random sets that we are aware of.

	\begin{theorem}\label{thm:threshold}
		The threshold for $[2,n]_p$ to contain a product Schur triple is $(n\log(n))^{-\frac{1}{3}}$.
	\end{theorem}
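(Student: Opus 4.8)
The plan is to prove the two directions separately via the first and second moment methods, the key point being to correctly count product Schur triples in $[2,n]$. Let $X$ be the number of product Schur triples $(a,b,c)$ with $ab=c$ that survive in $[2,n]_p$; a triple with all of $a,b,c$ distinct survives with probability $p^3$, while triples of the form $(a,a,a^2)$ survive with probability $p^2$. First I would establish that the number of product Schur triples in $[2,n]$ is $\Theta(n\log n)$: the count of pairs $(a,b)$ with $2\le a\le b$ and $ab\le n$ is $\sum_{a=2}^{\sqrt n}(\lfloor n/a\rfloor - a+1) = n\log n + O(n)$ by comparison with $\int_2^{\sqrt n} n/a\,da$, and the number of squares $a^2\le n$ is only $O(\sqrt n)$, hence negligible. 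Therefore $\mathbb{E}(X) = \Theta(p^3 n\log n)$, and this is $o(1)$ exactly when $p \ll (n\log n)^{-1/3}$, giving the $0$-statement by Markov's inequality.

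For the $1$-statement, suppose $p \gg (n\log n)^{-1/3}$, so $\mathbb{E}(X)\to\infty$; I would apply the second moment method, for which it suffices to show $\mathrm{Var}(X) = o(\mathbb{E}(X)^2)$, i.e. that the dominant contribution to $\mathbb{E}(X^2) = \sum_{T,T'} \Pr{T,T'\text{ both survive}}$ comes from \emph{disjoint} pairs of triples. The pairs sharing no element contribute at most $\mathbb{E}(X)^2$. The error terms come from pairs of distinct product Schur triples $T=(a,b,c)$ and $T'=(a',b',c')$ sharing at least one integer; the crucial structural fact is that two product Schur triples can share \emph{at most one} integer (if they shared two, the multiplicative relation would force them to be equal, with the only mild exception being configurations like $\{a,b,ab\}$ versus $\{ab, 1, ab\}$, which do not arise since $1\notin[2,n]$). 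So I only need to bound $\sum_{T\cap T'\ne\emptyset, |T\cap T'|=1} p^{|T\cup T'|}$, where $|T\cup T'|\in\{4,5\}$.

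The main work, and the step I expect to be the genuine obstacle, is bounding the number of such overlapping pairs of triples. Fixing the shared integer $m\in[2,n]$, I need to count product Schur triples through $m$: these are of the form $(a,b,m)$ with $ab=m$ — there are $\tau(m) = O(n^{o(1)})$ of these, where $\tau$ is the divisor function — or of the form $(m,b,mb)$ with $mb\le n$ — there are $O(n/m)$ of these. Squaring and summing over $m$, the first type contributes $\sum_m \tau(m)^2 \cdot p^4 = O(n^{1+o(1)}p^4)$, and one checks $n^{1+o(1)}p^4 = o((p^3 n\log n)^2) = o(p^6 n^2 \log^2 n)$ precisely because $p \gg (n\log n)^{-1/3}$ forces $p^2 n \to \infty$ (indeed $p^2 n \gg (n\log n)^{-2/3} n = n^{1/3}(\log n)^{-2/3} \to\infty$). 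The second type contributes $\sum_m (n/m)^2 p^4 = O(n^2 p^4)$ (the sum $\sum 1/m^2$ converges), and again $n^2 p^4 = o(p^6 n^2\log^2 n)$ since $p^2\log^2 n \to \infty$. The mixed and $|T\cup T'|=5$ cases are handled by the same two divisor estimates with a factor of $p^5$, which only helps. Summing all error contributions gives $o(\mathbb{E}(X)^2)$, so Chebyshev's inequality yields $\Pr{X=0}\to 0$, completing the proof. The one subtlety to handle carefully is the bookkeeping of the divisor-function moment $\sum_{m\le n}\tau(m)^2 = O(n\log^3 n)$, which is standard, and confirming that the pairs-of-distinct-triples sharing an element but with $|T\cup T'| < 4$ (which would be more dangerous) genuinely cannot occur — this follows from the at-most-one-common-element observation above.
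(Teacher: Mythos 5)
Your $0$-statement is fine and is exactly the paper's first-moment computation. The gap is in the $1$-statement: the plain second moment method applied to the full count $X$ of surviving triples does not work at this threshold, and the specific inequality you invoke to dismiss the dangerous variance term is false. Consider the pairs of triples sharing a single small factor, say the element $2$: there are $\Theta(n)$ triples of the form $(2,b,2b)$, hence $\Theta(n^2)$ pairs of them sharing exactly the element $2$, each surviving with probability $p^5$ (the union has five elements, not four, but this only makes your bound larger than the truth, so the issue is not the exponent). This contributes $\Theta(n^2p^5)$ to $\mathbb{E}(X^2)$, while $\mathbb{E}(X)^2=\Theta(n^2p^6\log^2 n)$, so the ratio is $1/(p\log^2 n)$. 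Near the threshold $p=n^{-1/3+o(1)}$, we have $p\log^2 n\to 0$ (and a fortiori your stated justification ``$p^2\log^2 n\to\infty$'' fails: $p^2\log^2 n = n^{-2/3+o(1)}\to 0$). So this term is not $o(\mathbb{E}(X)^2)$; in fact it genuinely dominates, since $\operatorname{Var}(X)\ge \operatorname{Var}\bigl(\mathbb{E}(X\mid \mathds{1}_{2\in[2,n]_p})\bigr)=\Omega(p^5n^2)\gg\mathbb{E}(X)^2$. Hence Chebyshev cannot give $\Pr{X=0}\to 0$ this way; this heavy dependence of $X$ on whether a few small integers fall into the random set is precisely why the paper abandons the second moment method and instead uses a two-round exposure: it first shows that $A=[2,n]_{p_1}$ whp produces a product set $A^2\cap[2,n]$ of size $\Omega(p^2n\log n)\gg 1/p$ (via the divisor bound to control multiplicities and a Chebyshev argument for the sum $\sum_{a\in A}1/a$ over blocks of length $1/p$), and then lets the second copy $B=[2,n]_{p_2}$ hit this set.

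Two further remarks. First, your structural claim that two distinct product Schur triples share at most one integer is false (e.g.\ $\{2,3,6\}$ and $\{2,6,12\}$ share two elements); this particular case happens to be harmless, since there are only $O(n\log n)$ such pairs contributing $O(p^4 n\log n)=o(\mathbb{E}(X)^2)$, but it should not be asserted. Second, your approach is salvageable if you truncate: restrict $X$ to triples $ab=c$ whose smaller factor satisfies, say, $a\ge n^{1/3}$. There are still $\Theta(n\log n)$ such triples, and now the pairs sharing a common factor contribute only $O\bigl(n^2p^5\sum_{a\ge n^{1/3}}a^{-2}\bigr)=O(n^{5/3}p^5)=o(\mathbb{E}(X)^2)$, while the shared-product pairs are controlled by $\sum_c\tau(c)^2=O(n\log^3 n)$ as you say; with that modification the second moment argument closes, giving an alternative to the paper's two-round proof. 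As written, however, the variance estimate is wrong at the crucial point.
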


 The lower bound, i.e. the statement that for $p\ll (n\log(n))^{-\frac{1}{3}}$, with high probability the set $[2,n]_p$ does not contain a product Schur triple, is simply an application of the first-moment method. The challenge here is to show the corresponding upper bound. The proof of Theorem~\ref{thm:threshold} is given in Section~\ref{sec:random}.

    \subsection{Product Schur triples in randomly perturbed sets} 
	
	One may also ask how much we need to randomly perturb a set in order to obtain a given property.
	The origin of this type of question dates back to the work of Bohman, Frieze and Martin~\cite{bohman2003many}. They investigated the number of random edges that must be added to an arbitrary dense graph in order to make the resulting graph Hamiltonian with high probability\footnote{An event that occurs with high probability is one whose probability depends on a certain number $n$ and tends to one as $n$ tends to infinity.}.

	In 2018, Aigner-Horev and Person~\cite{aigner2019monochromatic} initiated the study of randomly perturbed structures in additive combinatorics, specifically in the context of 2-colouring the set $[n]$.
	They showed that if $A \se [n]$ is a dense set and $p \gg n^{-2/3}$, then with high probability any 2-colouring\footnote{It is also natural to consider the corresponding `non-coloured' Schur problem, but in this case the problem is much simpler. If $A$ is dense and $p \gg n^{-1}$, then $A \cup [n]_p$ contains a Schur triple with high probability, since $(A-A)\cap [n]$ is also dense. This is best possible, as $[n]_p = \emptyset$ with high probability if $p \ll n^{-1}$ and the set of odd numbers does not contain a Schur triple.} of $A \cup [n]_p$ contains a monochromatic Schur triple.
    This is best possible since $[n]_p$ does not contain a Schur triple with high probability if $p \ll n^{-2/3}$, and therefore we could for example take $A$ the set of odd numbers, colour it with red, and colour $[n]_p \setminus A$ with blue.
	In a recent development, Das, Knierim, and Morris~\cite{das2023schur} refined the work in~\cite{aigner2019monochromatic} and analysed random perturbations of sets with sizes ranging between $\sqrt{n}$ and $\eps n$.
	
	Inspired by the work in~\cite{aigner2019monochromatic} and~\cite{das2023schur}, we initiate the study of product Schur triples in randomly perturbed sets.
	For a function $\alpha: \mathbb{N} \to (0, 1]$, let $\hat{p}_\alpha:  \mathbb{N} \to (0, 1)$ be a function such that the following holds:
	\begin{itemize}
		\item[$(A)$] There exists a sequence of sets $(C_n)_{n \in \mathbb{N}}$ with $C_n \se [2,n]$ such that $|C_n| \ge (1-\alpha(n)) n$ and such that for all $p \ll \hat{p}_{\alpha}$ we have
		\[\lim_{n\to \infty}\Pr{C_n \cup [2,n]_p \text{ contains a product Schur triple}}=0.\]
		\item[$(B)$] For all sequences of sets $(C_n)_{n \in \mathbb{N}}$ with $C_n \se [2,n]$ and $|C_n| \ge (1-\alpha(n)) n$ and for all $p \gg \hat{p}_{\alpha}$ we have
		\[\lim_{n\to \infty}\Pr{C_n \cup [2,n]_p \text{ contains a product Schur triple}}=1.\]
	\end{itemize}
	We refer to $\hat{p}_\alpha$ as a threshold for the $\alpha$-randomly perturbed product Schur property.
	Observe that if $\alpha \equiv 1$, then $\hat{p}_{\alpha}$ can be taken to be a threshold for $[2,n]_{p}$ to contain a product Schur triple. If $\alpha  \le 1/\sqrt{n}$, then $C_n$ has a product Schur triple and hence $\hat{p}_{\alpha}$ can be taken to be $\hat{p}_{\alpha}=0$ (note that $(A)$ is a vacuous statement in this case).
	In general, $\hat{p}_{\alpha}$ is known to exist for every non-increasing function $\alpha$, see~\cite{bollobas1987threshold}.
	Similar to the standard threshold function, if  $\hat{p}_\alpha$ and  $\hat{q}_\alpha$ are functions that satisfy both items $(A)$ and $(B)$, then $\hat{p}_\alpha = \Theta (\hat{q}_\alpha)$.
	For this reason, again we slightly abuse notation and refer to a function $\hat{p}_{\alpha}$ satisfying $(A)$ and $(B)$ as \emph{the} threshold function for the $\alpha$-randomly perturbed product Schur property.
    
    To state our first result in the randomly perturbed model, we need the following constant related to the number of integers in $[n]$ that have a divisor within a given interval. The order of magnitude of this quantity was established by Ford~\cite{Ford2008}:
	\begin{align}\label{eq:delta}
		\delta = 1 - \dfrac{1+\log \log (2)}{\log(2)} \sim 0.086071 \, .
	\end{align}
    The smallest function $\alpha$ in which we were able to find the threshold $\hat{p}_{\alpha}$ for having a product Schur triple in randomly perturbed sets is of order $\alpha = (\log(n))^{-\delta+o(1)}$.
    
    \begin{theorem}\label{thm:corollary}
        For $(\log(n))^{-\delta} (\log \log(n))^{3/2+\delta} \le \alpha=o(1)$ we have $\hat{p}_\alpha(n) = n^{-1/2+o(1)}$.
    \end{theorem}

    We have also obtained bounds for the threshold for the $\alpha$-randomly perturbed product Schur property for a wide range of $\alpha$, which match when $\alpha$ is constant.
    Moreover, it interpolates from the roughly $n^{-1/2}$ from Theorem~\ref{thm:corollary} in the direction of $n^{-1/3}$, which is roughly the threshold in $[2,n]_p$ alone (recall Theorem~\ref{thm:threshold}).
    
    Our result depends on two functions $f: (0,1) \to \mathbb{R}$ and $\beta: (0,1)\to \mathbb{R}$, where $f(\alpha)$ is chosen such that
	\begin{align}\label{eq:defn-f}
	    (4f(\alpha))^{\delta} \log (1/(2f(\alpha)))^{-3/2}  = \alpha \qquad \text{and} \qquad \beta(\alpha) = \dfrac{f(\alpha)}{1+2f(\alpha)} \, .
	\end{align}
    We note that technical, but elementary calculations for $\alpha \in (0,2^{-7})$ give that $f(\alpha)\le 1/4$ and
    \begin{align}
    \label{eq:f_alpha}
    \alpha^{1/\delta} \left( \log\left(2 \alpha^{-1/\delta}\right) \right)^{-3/(2\delta)} \le f(\alpha) \le \alpha^{1/\delta} \left( \log\left(2 \alpha^{-1/\delta}\right) \right)^{3/(2\delta)} \, .
    \end{align}
    For a proof of this, see Claim~\ref{eq:growth-of-f}.
    
    Our general theorem in the randomly perturbed setting can be stated as follows. Observe that $\alpha$ varies from a logarithmic-like range to a constant range.

	\begin{theorem}\label{thm:perturbed}
	There exists a constant $0<\gamma \le 1$ such that for any $\alpha$ with $$(\log(n))^{-\delta} (\log \log(n))^{3/2+\delta}\leq \alpha \leq2^{-7},$$
    the following holds:
		\begin{enumerate}
			\item [$(i)$] There exists a sequence of sets $(C_n)_{n \in \mathbb{N}}$ with $|C_n| \ge (1-2\gamma^{-1} \alpha)n$ such that for all $p \ll  n^{-\frac{1}{2}+\beta(\alpha)}$ we have
			\[\lim_{n\to \infty}\Pr{C_n \cup [2,n]_p \text{ contains a product Schur triple}}=0.\]
			\item [$(ii)$] For any sequences of sets $(C_n)_{n \in \mathbb{N}}$ with $|C_n| \ge \left (1- 2^{-1}\gamma\alpha\right )n$ and for all $p \gg  \alpha^{-1} n^{-\frac{1}{2}+\beta(\alpha)}$ we have
			\[\lim_{n\to \infty}\Pr{C_n \cup [2,n]_p \text{ contains a product Schur triple}}=1.\]
		\end{enumerate}
	\end{theorem}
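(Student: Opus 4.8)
The plan is to treat the two parts separately, as they require genuinely different inputs. For part $(ii)$, the ``$1$-statement'', I would first recall the key number-theoretic fact behind the constant $\delta$: by Ford's theorem~\cite{Ford2008}, the number of integers in $[n]$ with a divisor in a dyadic interval $[y,2y]$ is of order $n(\log n)^{-\delta}(\log\log n)^{-3/2}$. The strategy is a classical sprinkling/union argument. Given a dense set $C_n$ with $|C_n|\ge(1-\gamma\alpha)n$, I would locate a dyadic scale $y=y(\alpha)$, chosen so that the set $D_y$ of integers in $[n]$ having a divisor in $[y,2y]$ still satisfies $|D_y|\ge \alpha n / (\text{polylog})$ but $|D_y|$ is small enough that $C_n$ must contain a positive proportion of $D_y$; quantitatively one wants $|D_y|\gg \gamma\alpha n$, which is exactly where the hypothesis $\alpha\le f^{-1}(4^{-1})$ and the definition of $f$ enter (balancing $\alpha^{1/\delta}$ against the polylog factors). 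For each $c\in C_n\cap D_y$, write $c=ab$ with $a\in[y,2y]$; then $a$ and $b$ are both in $[2,n]$, and the random set $[2,n]_p$ with $p\gg\alpha^{-1}n^{-1/2+\beta(\alpha)}$ needs to catch one such pair $\{a,b\}$ with $a$ in a fixed interval of length $y$. A second-moment computation on the number of monochromatic (or rather, present) product triples $\{a,b,c\}$ with $c\in C_n$, $a\in[y,2y]$ shows that this number is concentrated once $p^2\cdot|C_n\cap D_y|\gg 1$, i.e.\ $p\gg (\gamma\alpha n)^{-1/2}(\text{polylog})$; matching this with the claimed bound pins down $\beta(\alpha)$ via $\beta(\alpha)=f(\alpha)/(1+2f(\alpha))$. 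The colouring is then handled by the standard observation that once there are many present product triples anchored in $C_n$, any $2$-colouring must make one monochromatic --- actually here there is only one colour on $C_n$'s side, so one just needs $a$ and $b$ the same colour; a short Ramsey-type argument (looking at powers of a fixed base inside the long interval, as in the remark after Schur's theorem in the introduction) finishes it, or alternatively one runs the count over triples with all three coordinates in $[2,n]_p$ and cites Theorem~\ref{thm:threshold}-type reasoning.

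For part $(i)$, the ``$0$-statement'', the plan is to construct $C_n$ explicitly as a near-complement of a sparse ``divisor-rich'' set. Concretely, fix the dyadic scale $y=y(\alpha)$ as above and let $C_n = [2,n]\setminus D_y'$ where $D_y'$ is a suitably chosen subset of the integers with a divisor near $y$; one arranges $|D_y'|=\Theta(\alpha n)$ so that $|C_n|\ge(1-\Omega(\alpha))n$. The point of removing $D_y'$ is that it should destroy ``most'' product Schur triples, leaving only triples $ab=c$ in which $a,b$ both lie outside a controlled range, so that completing such a triple in $C_n\cup[2,n]_p$ forces at least one of $a,b$ into $[2,n]_p$ among a small family. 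Then a first-moment calculation bounds the expected number of product Schur triples in $C_n\cup[2,n]_p$: triples with all of $a,b,c\in[2,n]_p$ contribute $O(p^3 n\log n)$ which is $o(1)$ for $p\ll n^{-1/2+\beta(\alpha)}$ provided $\beta(\alpha)<1/6$ (true since $f\le 1/4$ gives $\beta\le 1/6$), while triples with exactly one or two random elements contribute $O(p\cdot|C_n\cap(\text{divisor set})|)$ and $O(p^2\cdot(\text{count}))$ respectively, each of which is $o(1)$ precisely because $D_y'$ was removed and because $p\ll n^{-1/2+\beta(\alpha)}$ with the chosen $\beta(\alpha)$. Markov's inequality then gives the claim.

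The main obstacle I anticipate is the bookkeeping around Ford's estimate: one needs not merely the order of magnitude of $H(n,y,2y)$ but enough uniformity to (a) choose $y$ as a function of $\alpha$ in the whole stated range $(\log\log n)^{3/2+\delta}(\log n)^{-\delta}\ll\alpha\le f^{-1}(4^{-1})$, and (b) control the number of integers $c\le n$ with a divisor in $[y,2y]$ \emph{and} with a second divisor constraint (needed to avoid overcounting in the second moment and to ensure the removed set $D_y'$ really does kill the low-order triples). Threading the functions $f$ and $\beta$ through both directions so that the exponents match at $p=n^{-1/2+\beta(\alpha)}$ is where the definition $f(\alpha)=\alpha^{1/\delta}/(4(\log(1/\alpha))^{3/2\delta})$ is forced, and verifying that the lower endpoint of the $\alpha$-range corresponds exactly to $\alpha$ of order $(\log\log n)^{3/2+\delta}(\log n)^{-\delta}$ --- i.e.\ to the regime where $y$ can still be taken polynomially large --- will require care. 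The rest (second moment for $(ii)$, first moment for $(i)$, and the short colouring argument) I expect to be routine given Theorem~\ref{thm:threshold} and the tools already in the paper.
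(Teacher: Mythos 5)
Your proposal for part $(i)$ has a construction that does not work as stated. Taking $C_n=[2,n]\setminus D_y'$, where $D_y'$ only consists of integers with a divisor near some scale $y$, leaves all small integers inside $C_n$, and then $C_n$ already contains a product Schur triple deterministically: only $\Theta(\alpha n)$ elements are removed, so among the $\sim n/2$ triples $(2,k,2k)$ plenty survive with all three elements in $C_n$, and the probability in $(i)$ is $1$, not $0$, for every $p$. The missing idea is that the deterministic set must consist only of \emph{large} integers: the paper takes $C_n=[n^{1-2\beta(\alpha)},n]\setminus H(n,(y,z))$ with $y=n^{1/2-\beta(\alpha)}$, $z=n^{1/2+\beta(\alpha)}$, so that any triple $ab=c$ with $c\in C_n$ forces $a\le n^{1/2-\beta(\alpha)}$ (by the no-divisor-in-$(y,z)$ property), such an $a$ cannot lie in $C_n$, and hence every triple needs a random element below $n^{1/2-\beta(\alpha)}$, which is unlikely for $p\ll n^{-1/2+\beta(\alpha)}$. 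Related to this, your use of a \emph{dyadic} interval cannot cover the stated range of $\alpha$: Ford's theorem gives $|H(n,(y,2y))|=\Theta\bigl(n(\log n)^{-\delta}(\log\log n)^{-3/2}\bigr)$ for polynomial $y$, which is far smaller than $\alpha n$ once $\alpha$ is, say, constant. One needs the wide interval $(n^{1/2-\beta},n^{1/2+\beta})$ whose width is tuned so that $u=\log z/\log y-1=4f(\alpha)$ and $|H(n,(y,z))|=\Theta(\alpha n)$; this tuning (not a matching of second-moment thresholds) is exactly what forces $\beta(\alpha)=f(\alpha)/(1+2f(\alpha))$.

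For part $(ii)$, the claim that a second moment gives concentration as soon as $p^2|C_n\cap D_y|\gg 1$, i.e.\ $p\gg(\gamma\alpha n)^{-1/2}\operatorname{polylog}(n)$, is not justified and is in fact the crux of the difficulty: the pairs $(a,b)$ with $ab\in C_n\cap H$ overlap heavily in their small factors (a single $a$ near $n^{1/2-\beta}$ can be the small factor of up to $n^{1/2+\beta}$ elements), so the variance term $\sum_v d(v)^2p^3$ can swamp the squared mean and Chebyshev only goes through when $p\gg\alpha^{-1}n^{-1/2+\beta(\alpha)}$ -- which is precisely the threshold in the statement, not $(\alpha n)^{-1/2}$. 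The paper's route is to control exactly this degree concentration: form the graph on $[2,n^{1/2+\beta(\alpha)}]$ with edges $\{a,b\}$ whenever $ab\in C_n\cap H(n,(y,z))$, note it has at least $\gamma\alpha n$ edges, extract the set $X$ of vertices of degree at least half the average (so $|X|\ge\gamma\alpha n^{1/2-\beta(\alpha)}$ and each such vertex has that many neighbours), and use two-round exposure: the first random copy hits $X$, the second hits the neighbourhood. Your closing discussion of colourings and a Ramsey-type finish is not needed at all -- the statement asks only that $C_n\cup[2,n]_p$ \emph{contains} a product Schur triple, with no colouring involved -- so that part of the plan addresses a different problem.
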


    If $(\log(n))^{-\delta} (\log \log(n))^{3/2+\delta} \le \alpha = o(1)$, then Theorem~\ref{thm:perturbed} directly implies Theorem~\ref{thm:corollary}.
    On the other side, when $\alpha\ge \tfrac 12 \gamma$, $C_n$ might be empty and in that case the threshold is $(n\log(n))^{-1/3}$ as follows from Theorem~\ref{thm:threshold}.
    We believe that our theorem actually holds true with $\gamma=1$ and can further be improved such that the exponent of $n$ in $\hat{p}_{\alpha}$ tends towards $-\tfrac13$ as $\alpha$ increases.
    We note that numerically we can find more accurate estimates of $f(\alpha)$ for $\alpha\ge 2^{-7}$ compared to \eqref{eq:f_alpha}, but then still the $\gamma$ is the bottleneck.
	
	\medskip
	
	\textbf{The rest of this paper is divided as follows.}
	In Section~\ref{sec:det} we prove Theorems~\ref{thm:det_prod} and~\ref{thm:det_prod_mult}; in Section~\ref{sec:random} we prove Theorem~\ref{thm:threshold}; in Section~\ref{sec:perturbed} we prove Theorem~\ref{thm:perturbed}; and in Section~\ref{sec:remarks} we state some open problems.


	\section{Product Schur in deterministic sets}
	\label{sec:det}
	
    We begin this section with the proof of Theorem~\ref{thm:det_prod}.
 
	\begin{proof}[Proof of Theorem~\ref{thm:det_prod}.]
        We first prove the upper bound.
        Let $\varepsilon>0$ and $k,n \in \mathbb{N}$.
		Assume for a contradiction that we can find a set $A\subseteq [2,n]$ of size at least $n-(1-\eps) n^{1/S(k)}$ which can be partitioned into $k$ product-free sets $A_1,\dots{},A_k$. Fix $A'=[\tfrac12 \eps n^{1/S(k)}, n^{1/S(k)}]$ which has size $(1-\tfrac12 \eps)n^{1/S(k)}$.
        
        Importantly, for distinct $a,b\in A'$ and any choice of $i,j\in [S(k)]$ we have that $a^i\neq b^j$.
		Indeed, without loss of generality we have $j>i$ and it suffices to show that for all $b\in A'$ we have $b^\frac ji>n^{1/S(k)}$, because this implies $b^\frac ji\not\in A'$ and therefore $a^i\neq b^j$ for all $a\in A'$.
        
		Because $\tfrac12 \eps n^{1/S(k)}\in A'$ and for any $b\in A'$ we have $b^{\frac{j}{i}}\geq (\tfrac12 \eps n^{1/S(k)})^{\frac ji}$, our statement is equivalent to showing that $(\tfrac12 \eps n^{1/S(k)})^{\frac ji}>n^{1/S(k)}$ for any choice of $j>i$ in $[S(k)]$.
        It suffices to verify the case $j=S(k),\ i=S(k)-1$ and a short calculations shows that this holds because $(\tfrac12 \eps)^{S(k)^2}>n^{-1}$ by assumption.
		
		Next, we show that there is an element $a$ in $A'$ such that $P(a):=\llb a^i : i=1,\dots{}, S(k)\rrb$ is contained in $A$.
		Indeed, notice that for all $a$ in $A'$ we have $P(a)\subseteq [2,n]$.
        Moreover, if $a, a'\in A'$ are distinct, then $P(a)$ and $P(a')$ are disjoint.
        Therefore, if for each one of the elements of $A'$ a different element of $[2,n]$ was missing from $A$, we would get
		\[|A| \le n - |A'| = n-(1-\tfrac12 \eps) n^{1/S(k)} \, . \]
		
		Now fix an $a\in A'$ such that $P(a)\subseteq A$.
		By applying $\log_a(\cdot)$ to the elements of $P(a)$, the partition $A_1, \dots, A_k$ of $A$ restricted to $P(a)$ induces a partition $S_1, \dots, S_k$ of $[S(k)]$.
		As the partition $A_1,\dots,A_k$ is product-free, the partition $S_1, \dots, S_k$ is sum-free, in contradiction to the definition of $S(k)$.

        It remains to give the construction of a colouring for the lower bound.
		For an integer $k$ let $\chi : [S'(k)-1] \rightarrow [k]$ be a $k$-colouring of $[S'(k)-1]$ without monochromatic $a+b=c$ and $a+b=c-1$.
		We colour each integer $a \in (n^{1/S'(k)},n]$ with colour $\chi(\lceil S'(k) \cdot \log_n(a) \rceil - 1)$.
		For a contradiction assume that there is a monochromatic product $ab=c$ in this colouring.
		Then let $a'=\lceil S'(k) \cdot \log_n(a) \rceil -1$,  $b'=\lceil S'(k) \cdot \log_n(b) \rceil -1$, and $c'=\lceil S'(k) \cdot \log_n(c) \rceil -1$ and note that $\log_n(a)+\log_n(b)=\log_n(c)$ implies $a'+b'=c'$ or $a'+b' =c'-1$.
		But as $ab=c$ was monochromatic we have $\chi(a')=\chi(b')=\chi(c')$, a contradiction.
	\end{proof}

    In order to prove Theorem~\ref{thm:det_prod_mult}, we need the following supersaturation lemma.
    This lemma is sharp up to a constant factor, as the set $[n] \setminus [ \lfloor \frac{1}{2}\sqrt{n} \rfloor]$ contains at most $4n$ product Schur triples.
    Indeed, if $a,b,c \in [n] \setminus [ \lfloor \frac{1}{2}\sqrt{n} \rfloor]$ are such that $ab = c $, then $\sqrt{n}/2 \le a,b$ and hence $a,b \le 2 \sqrt{n}$, which implies that the number of product Schur triples in the set is at most $(2\sqrt{n})^2 = 4n$.

	\begin{lemma}\label{thm:supersat}
		For every $\varepsilon>0$ there exists $n_0(\eps)\in \mathbb{N}$ such that the following holds for all $n \ge n_0(\eps)$. If $A\subseteq [2,n]$ is a set of size at least $n-\tfrac 12 \sqrt{n}$, then there are at least $n/8$ solutions in $A$ to $ab=c$.
	\end{lemma}

Another tool needed in the proof of Theorem~\ref{thm:det_prod_mult} and also to prove Lemma~\ref{thm:supersat} is the following consequence of a result of Wigert~\cite{wigert1907ordre} on the number of divisors of an integer.
	
	\begin{lemma}
		\label{lem:divisors}
		Every integer $n$ has at most $e^{O\left(\frac{\log(n)}{\log \log(n)}\right)}$ divisors.
	    In particular, for every $\varepsilon>0$ there exists $n_0(\eps) > 0$ such that if $n \ge n_0(\eps)$, then $n$ has at most $n^{\eps}$ divisors.
	\end{lemma}

	\begin{proof}[Proof of Lemma~\ref{thm:supersat}]
    Let us write $B=A\cap [\sqrt{n}]$ and $C = [n]\setminus A$ and note that $${|B| \ge \frac{\sqrt{n}}{2}\ge |C|}.$$
    Let $\mathcal{A}$ be the set of triples $(a,b,c)\in B\times B\times A$ such that $ab=c$ and let $\mathcal{C}$ be the set of triples $(a,b,c) \in B\times B \times C$ such that $ab=c$.
    Our main goal is to lower bound the size of $\mathcal{A}$. For this, we first note that 
    $$|\mathcal{A}|+|\mathcal{C}| = |\{(a,b,c)\in B\times B\times [n]: ab=c\}| = |B|^2 \ge n/4.$$
    In the last inequality, we used that $|B|\ge \sqrt{n}/2$.
    This implies that $|\mathcal{A}|\ge n/4-|\mathcal{C}|$, and hence it suffices to upper bound the size of $\mathcal{C}$.
    Now, note that $|\mathcal{C}|$ is at most the number of solutions of $ab=c$ with $c \in C$.
    By Lemma~\ref{lem:divisors}, as each $c \in C$ contains at most $n^\eps$ divisors, it follows that $|\mathcal{C}| \le n^{\eps}|C| \le n^{1/2+\eps}$.
    We conclude that the size of $\mathcal{A}$ is at least $n/4-n^{1/2+\eps}\ge n/8$, as required.\end{proof}

 

	
	\begin{proof}[Proof of Theorem~\ref{thm:det_prod_mult}]

    Let us fix $\epsilon \in \left(0,\frac{1}{12}\right)$, take $n$ to be large enough, and fix a red-blue colouring of $[2,n]$. We denote by $R$ the set of numbers in $[\lfloor n^{1/3} \rfloor]$ that are coloured red, and by $B$ the set of those coloured blue.
    Without loss of generality, we assume that $|R| \ge |B|$.

    If $|B|< n^{1/6}/2$, then $|R| \ge n^{1/3} - n^{1/6}/2$, and hence by Lemma~\ref{thm:supersat} we would have at least $n^{1/3}/8$ red product Schur triples.
    Thus, we may assume from now on that $|R| \ge |B| \ge n^{1/6}/2$.
    Set 
    \begin{align*}
        P_{R}=\llb ab : a, b\in R\rrb \qquad \text{and} \qquad P_{B}=\llb ab : a, b\in B\rrb.
    \end{align*}
    By Lemma~\ref{lem:divisors} we have that these two sets have size at least $n^{1/3-\varepsilon}$.
    Moreover, we may assume that $P_{R}$ contains at least $n^{1/3-\varepsilon}/2$ blue elements and that $P_{B}$ contains at least $n^{1/3-\varepsilon}/2$ red elements, otherwise we are done.

    For a set $\{s_1,s_2,s_3,s_4\}$, we say that $(a,b,c)$ is a product Schur triple \emph{associated} to it if there exist distinct indices $i,j,k \in \{1,2,3,4\}$ such that $a=s_i$, $b=s_js_k$ and $c=s_is_js_k$.
    Now, we define $\mathcal{S}$ to be the set of all pairs $\big ((a,b,c),\{r_1,r_2,b_1,b_2\} \big )$ with the following properties:
        
        \vspace*{2pt}
        
        (i) $r_1, r_2 \in R$ and $b_1,b_2 \in B$ (all distinct), $r_1r_2 \in B$ and $b_1b_2 \in R$;
        
        (ii) $(a,b,c)$ is a product Schur triple associated to $\{r_1,r_2,b_1,b_2\}$.

        \vspace*{2pt}

        We claim that if $\{r_1,r_2,b_1,b_2\}$ is a set as in (i), then
        there exists a monochromatic product Schur triple associated to $\{r_1,r_2,b_1,b_2\}$.
        In fact, if $r_1r_2b_1$ is blue, then $(b_1,r_1r_2, r_1r_2b_1)$ is a blue product Schur triple, and if $b_1b_2r_1$ is red, then $(r_1,b_1b_2, b_1b_2r_1)$ is a red product Schur triple.
        Thus, we may assume that this is not the case, and hence we have that $r_1r_2b_1$ is red and that $b_1b_2r_1$ is blue.
        Now, if $r_1b_1$ is red, then $(r_2,r_1b_1,r_1r_2b_1)$ is a red product Schur triple; if $r_1b_1$ is blue, then $(b_2,r_1b_1, b_1b_2r_1)$ is a blue product Schur triple.
        This proves our claim.

        As we are assuming that $P_R = \{ ab: a,b \in R \}$ contains at least $n^{1/3-\eps}/2$ blue elements and that $P_B = \{ ab: a,b \in B \} $ contains at least $n^{1/3-\eps}/2$ red elements, we have at least $n^{2/3-2\eps}/4$ sets $\{r_1,r_2,b_1,b_2\}$ as in (i).
        This together with our previous claim implies that 
        \begin{align}\label{eq:lower-bound-S}
            |\mathcal{S}| \ge n^{2/3-2\eps}/4.
        \end{align}

        Now fix a monochromatic product Schur triple $(a,b,c)$. By Lemma~\ref{lem:divisors} there are at most $n^{\eps}$ ways to write $c$ as a multiplication of three numbers, say $c=s_1s_2s_3$.
        Given $s_1,s_2$ and $s_3$, there are at most $\max\{|R|,|B|\} \le |R|$ ways to choose a fouth element $s_4$ so that $\big ((a,b,c),\{s_1,s_2,s_3,s_4\} \big)$ is in $\mathcal{S}$.
        Thus,
        \begin{align}\label{eq:upper-bound-S}
            | \mathcal{S} | \le n^{\eps} |R| \# \{ \text{monochromatic product Schur triples}\}.
        \end{align}
        By combining~\eqref{eq:lower-bound-S} and~\eqref{eq:upper-bound-S}, and using that $|R| \le n^{1/3}$, 
        we obtain that the number of monochromatic product Schur triples is at least 
        \begin{align*}
            \dfrac{n^{2/3-2\eps}}{4n^{\eps}|R|} \ge n^{1/3-4\eps}.
        \end{align*}
        This concludes our proof.
	\end{proof}

	\section{Product Schur triples in random sets}
	\label{sec:random}
	
	In this section, we prove Theorem~\ref{thm:threshold}.
	

	\begin{proof}[Proof of Theorem~\ref{thm:threshold}]
        To lower bound the threshold we want to estimate the expected number of product Schur triples.
        In $[2,n]$ there are at most $\sqrt{n}$ product Schur triples $(a,b,c)$ with $a=b$.
        We denote the remaining triples by $T_n$ and note that $|T_n|$ is exactly the number of ordered pairs $(a, b)$ of elements of $[2,n]$ such that $a\cdot b\leq n$ and $a\neq b$. 
        Next, we count $|T_n|/2$, which is precisely the number of triples $(a,b,c)$ with $a<b$ and $a\cdot b\leq n$. Note that
        
		\begin{align*}
			\frac{1}{2}\vass{T_n} &=\sum_{a=2}^{\floor{\sqrt{n}}}\vass{\llb b\in [2,n]: a< b,\ a\cdot b\leq n\rrb}=\sum_{a=2}^{\floor{\sqrt{n}}}\vass{\llb b\in [2,n]: a\cdot b\leq n\rrb\setminus [a]}.
		\end{align*}
	As $\vass{\llb b\in [2,n]: a\cdot b\leq n\rrb}=\floor{\frac{n}{a}}$, it follows that
		\begin{align*}
			\frac12\vass{T_n}&=\sum_{a=2}^{\lfl\sqrt{n}\rfl} \left( \floor{\frac{n}{a}}-a\right)=\sum_{a=2}^{\lfl\sqrt{n}\rfl} \floor{\frac{n}{a}} + O(n) =\sum_{a=2}^{\lfl\sqrt{n}\rfl} \frac{n}{a}+ O(n).
		\end{align*}
		As the harmonic numbers $H_x=\sum_{i=1}^x \frac1i$ asymptotically behave like $\log (x)$, it follows that $\vass{T_n} = (1 + o(1)) n\log(n)$.
 
        Now let $X$ be the random variable which counts the number of product Schur triples in $[2,n]_p$.
        We have
		\begin{align*}
			\Ex{X}=\sum_{ (a,b,c)\in T_n}\Pr{(a,b,c)\in  [2,n]_p} + O(p^2 \sqrt{n})=O(p^3n\log(n)+p^2 \sqrt{n}).
		\end{align*}
		Thus, if $p\ll (n\log(n))^{-1/3}$, then $\Ex{X}\ll 1$.
		By Markov's inequality\footnote{Markov's inequality states that if $X$ is a non-negative random variable and $t >0$, then $\Pr{X\ge t}\le \Ex{X}/t$.}, it follows that $\Pr{X\ge 1} \to 0$ if $p\ll (n\log(n))^{-1/3}$. This implies that $\hat{p}(n) \ge (n\log(n))^{-1/3}$.
		
		In order to prove that $\hat{p}(n)\le (n\log(n))^{-1/3}$, we consider two independent copies of a random set.
        As containing a product Schur triple is a monotone property, we can assume that $p = f(n)(n\log(n))^{-1/3}$, where $f(n)\to \infty$ as $n \to \infty$ but $f(n)\le \log(n)$.
		Let $q \in [0,1]$ be such that $(1-q)^2 = 1-p$; note that $q$ is asymptotically equal to $p/2$.
		Let $A := [2,n]_{q}$ and $B := [2,n]_{q}$ to be two independent random sets and set $C = A \cup B$. 
		Observe that $q$ was chosen so that $C$ has the same distribution as $[2,n]_p$.

		To show that $C$ contains a product Schur triple with high probability, we claim that it suffices to show that $\big| A^2\cap [2,n] \big |\gg 1/q$ with high probability.
		Indeed, set $Y = \big| A^2 \cap B \cap [2,n] \big |$.
		Observe that $Y \ge 1$ if and only if there exist $x,y \in A$ (not necessarily distinct) and $z \in B$ such that $xy =z$.
		If $\big| A^2\cap [2,n] \big |\gg 1/q$, then $\mathbb{E}_B(Y)\gg 1$, and hence by Chernoff's inequality\footnote{Chernoff's inequality states that if $X$ is a binomial random variable and $t \ge 0$, then $\Pr{|X-\Ex{X}|\ge t}\le 2e^{-t^2/(2\Ex{X}+t)}.$} we have
		\[\mathbb{P}_B\big(Y = 0\big) \le e^{-\omega(1)}.\]
		Thus,  $Y\ge 1$ with high probability as long as $\big| A^2\cap [2,n] \big |\gg 1/q$ with high probability.

		Next, we show that for a typical set $A$, no number $c \in [2,n]$ should have more than two representatives $(a,b) \in A \times A$ such that $a \le b$ and $ab = c$.
		Indeed, for each $c \in [2,n]$ consider the set of representatives of $c$ given by
		\[P_c = \{(a,b): a, b \in [2,n], a \le b \text{ and } ab =c\}.\]
		Let $\eps \in (0,10^{-1})$ be a constant. 
		If $n$ is sufficiently large, then number of divisors of $c$ is at most $O(n^{\eps})$, for all $c \in [n]$ by Lemma~\ref{lem:divisors}.
		Thus, $|P_c| = O(n^{\eps})$ and hence $P_c$ has at most $O(n^{3\eps})$ subsets of size three.
		For each $\{(a_i,b_i): i \in [3]\} \se P_c$, the probability that $(a_i,b_i) \in A \times A$ for all $i \in [3]$ is at most $q^5$, as one of the elements can be repeated in case $c$ is a perfect square. Thus, we have
		\begin{align}\label{eq:PcinterAtimesA}
			\Pr{|P_c \cap (A \times A)|\ge 3} = O(n^{3\eps}q^{5})
		\end{align}
		for all $c \in [2,n]$.
		By~\eqref{eq:PcinterAtimesA} combined with the union bound, it follows that the event that there exists a $c \in [2,n]$ for which $|P_c \cap (A \times A)|\ge 3$ has probability at most $O(n^{1+3\eps}q^{5})$. This tends to $0$ as $n$ tends to infinity, and hence $|P_c \cap (A \times A)|\le 2$ for all $c \in [2,n]$ with high probability.
		
		From the discussion above, it follows that
		\[\big|A^2 \cap [2,n] \big| \ge \frac{1}{2} \Big |\Big \{(a,b): a,b \in A, a \le b \text{ and } ab \le n\Big\} \Big |\]
		with high probability.
		In other words, we have
		\[\big|A^2 \cap [2,n] \big| \geq \frac12\sum \limits_{a \in A \cap [\sqrt{n}]} \big | [a,n/a]\cap A \big | = \frac12\sum \limits_{a \in [2,\sqrt{n}]} \big | [a,n/a] \cap A \big | \mathds{1}_{a \in A}\]
		with high probability.
		Now, note that by Chernoff's inequality, we have with high probability that for every $a\le \sqrt{n}/2$ it holds that
		\[\big| [a,n/a] \cap A \big|= (1\pm 2^{-1})q\left(\dfrac{n}{a}-a\right) \ge \dfrac{qn}{4a}.\]
		This implies that 
		\begin{align}\label{eq:boundonA2}
			|A^2\cap [2,n]| \ge \dfrac{qn}{8} \sum \limits_{a \in [2,\sqrt{n}/2]}  \dfrac{\mathds{1}_{a \in A}}{a}
		\end{align}
		with high probability.
		
		Now we bound the right-hand side of~\eqref{eq:boundonA2}.
		We decompose almost the whole interval $[2,\sqrt{n}/2]$ into disjoint sub-intervals of size $1/q$. Note that
		\[ [2,\sqrt{n}/2] \supseteq \bigcup_{i=1}^{\lfloor q\sqrt{n}/4\rfloor} (i/q,(i+1)/q] \, . \]
		Then, we have
		\begin{align}\label{eq:boundsuminv}
			\sum \limits_{a \in [2,\sqrt{n}/2]}  \dfrac{\mathds{1}_{a \in A}}{a} \ge 
			\sum \limits_{i=1}^{\lfloor q\sqrt{n}/4\rfloor} \dfrac{q}{i+1} \mathds{1}_{A \cap (i/q,(i+1)/q] \neq \emptyset}.
		\end{align}
		As the size of the interval $ (i/q,(i+1)/q] \cap \mathbb{N}$ is of order $1/q$ and $A=[2,n]_{q}$, we have
		\begin{align}\label{eq:Ainterval}
			\mathbb{P}\big( A \cap  (i/q,(i+1)/q] = \emptyset \big) \sim (1-q)^{1/q} \sim e^{-1}. \, 
		\end{align}
		
		By simplicity, denote
		\[S : = \sum \limits_{i=2}^{\lfloor q\sqrt{n}/4\rfloor} \dfrac{J_i}{i},\]
		where $J_i := \mathds{1}_{A \cap ((i-1)/q,i/q] \neq \emptyset}$
		for every $i\ge 2$.
		Note that $(J_i)_i$ are independent and identically distributed Bernoulli random variables with constant probability, see~\eqref{eq:Ainterval}.
		By combining~\eqref{eq:boundonA2} and~\eqref{eq:boundsuminv}, it follows that 
		\begin{align}\label{eq:A2intn-final}
			|A^2\cap [2,n]|  \ge q^2nS/8
		\end{align}
		with high probability.

		Now, our problem is reduced to bounding $S$.
		Observe that
		\begin{align*}
			\operatorname{Var}(S) = \sum \limits_{i=2}^{\lfloor q\sqrt{n}/4\rfloor} \dfrac{\operatorname{Var}(J_i)}{i^2} =\Theta(1) \qquad \text{and} \qquad \mathbb{E}(S)=\Theta(\log(n)).
		\end{align*}
		By Chebyshev's inequality\footnote{Chebyshev's inequality states that if $X$ is a random variable and $t >0$, then $\Pr{|X-\Ex{X}|\ge t}\le \operatorname{Var}{X}/t^2$.}, it follows that $S = \Theta(\log(n))$ with high probability, and hence
		it follows from~\eqref{eq:A2intn-final} that
		\begin{align*}
			|A^2\cap [2,n]|  = \Omega(q^2n \log(n)) \gg 1/q.
		\end{align*}
		In the last inequality, we used that $q = \Theta(p)$ and that $p \gg (n \log n)^{-1/3}$. This concludes our proof.
	\end{proof}
	
	\section{Product Schur triples in randomly perturbed sets}
	\label{sec:perturbed}

	For a positive integer $n$ and an interval $I \subseteq [2,n]$, we denote by
	\[ H(n,I) = \big\{x \le n: x = d \cdot y, \text{ for some } d \text{ in } I\big\} \,  \]
	the set of positive integers in $[n]$ that have a divisor in $I$.
	The main tool behind Theorem~\ref{thm:perturbed} is the following result of Ford \cite{Ford2008}. 
	
	\begin{theorem}\label{thm:kevin}
		There is an absolute constant $\gamma \in (0,1)$ such that for any integers $n,y,z$ with $n \ge 10^5$, $100 \le y \le z-1$, $y \le \sqrt{n}$, $2y \le z \le y^2$, and
		\[u= \dfrac{\log (z)}{\log (y)} - 1.\]
		we have
		\begin{align*}
			\gamma nu^{\delta} \left(\log \tfrac{2}{u}\right)^{-3/2} \le |H(n,(y,z))| \le \gamma^{-1} 
   nu^{\delta} \left(\log \tfrac{2}{u}\right)^{-3/2}.
		\end{align*}
	\end{theorem}

    Before we prove Theorem~\ref{thm:perturbed}, we shall need the following claim on the growth of $f$ stated in \eqref{eq:f_alpha} above.

    \begin{claim}\label{eq:growth-of-f} For $\alpha \in (0,2^{-7})$ we have $f(\alpha) \le 1/4$ and
    \begin{align}
    \alpha^{1/\delta} \left( \log\left(2 \alpha^{-1/\delta}\right) \right)^{-3/(2\delta)} \le f(\alpha) \le \alpha^{1/\delta} \left( \log\left(2 \alpha^{-1/\delta}\right) \right)^{3/(2\delta)} \, .
    \end{align}
    \end{claim}

    \begin{proof}
        As the function $g: \mathbb{R}_{> 0} \to \mathbb{R}$ given by $g(z)=z^{-z}$ attains its maximum at $z = e^{-1}$ and as $\delta > 1/20$, for all $z \in \mathbb{R}_{>0}$ we have
        \begin{align*}
            \left ( \dfrac{1}{2z^{3/(2\delta)}} \right )^z = \dfrac{1}{2^z} \cdot \dfrac{1}{z^{3z/(2\delta)}} \le e^{3/(2e\delta)} \le e^{12},
        \end{align*}
        and hence
        \begin{align}\label{eq:tech-1}
            z \log \left ( \dfrac{1}{2z^{3/(2\delta)}} \right ) \le 12
        \end{align}
        for all $z \in \mathbb{R}_{>0}$.
        By setting $y = z^{3/(2\delta)}$, it follows from~\eqref{eq:tech-1} that $y^{2\delta/3} \log ((2y)^{-1}) \le 12$, and hence
        \begin{align}\label{eq:tech-2} 
            y^{\delta} \left ( \log \left ( \dfrac{1}{2y} \right ) \right )^{3/2}\le 12^{3/2} \le 2^6
        \end{align}
        for all $y \in \mathbb{R}_{>0}$.
        In particular, it follows from~\eqref{eq:tech-2} that
        \begin{align}\label{eq:tech-3}
            h(y) \coloneqq y^{\delta} \left ( \log \left ( \dfrac{1}{2y} \right ) \right )^{-3/2} \ge 2^{-6} y^{2\delta}
        \end{align}
        for all $y \in \mathbb{R}_{>0}$.

        By using that $\alpha = 2^{2\delta}h(f(\alpha))$ (by definition of $f$, see~\eqref{eq:defn-f}) and replacing $y=f(\alpha)$ in~\eqref{eq:tech-3},
        we obtain
        \begin{align}\label{eq:tech-4}
            \big (2^{6-2\delta} \alpha \big )^{1/(2\delta)} = \big (2^6 h(f(\alpha)) \big )^{1/(2\delta)} \ge f(\alpha).
        \end{align}
        Thus, it follows from~\eqref{eq:tech-4} that $f(\alpha) \le 1/4$ for all $\alpha \in (0,2^{-6-2\delta})$.

        As $\log (2/\alpha^{1/\delta}) \ge 4$ for all $0 < \alpha < (2e^{-4})^{\delta}$ and we are in the range where $\alpha < 2^{-6-2\delta} < (2e^{-4})^{\delta}$, we obtain
        \begin{align}\label{eq:tech-5}
            \left ( \log \left ( \dfrac{2}{\alpha^{1/\delta}} \right ) \right )^{-3/2} \alpha \le \left ( \log \left ( \dfrac{2}{\alpha^{1/\delta}} \right ) \right )^{-\delta} \alpha \le 2^{-2\delta}\alpha = h(f(\alpha)) \le (f(\alpha))^{\delta}.
        \end{align}
        The lower bound on $f(\alpha)$ follows raising each term in the inequalities above to the power of $1/\delta$.
        For the upper bound, as $2^{-2}\alpha^{1/\delta} \le f(\alpha)$ (by the last inequality in~\eqref{eq:tech-5}), we have
        \begin{align}\label{eq:tech-6}
            \alpha = 4^{\delta} f(\alpha)^{\delta}  \left ( \log \left ( \dfrac{1}{2f(\alpha)} \right ) \right )^{-3/2} \ge 4^{\delta} f(\alpha)^{\delta}  \left ( \log \left ( \dfrac{2}{\alpha^{1/\delta}} \right ) \right )^{-3/2}.
        \end{align}
        The upper bound on $f$ then easily follows from~\eqref{eq:tech-6}.
    \end{proof}
	
	\begin{proof}[Proof of Theorem~\ref{thm:perturbed}]


 
		Let $\gamma>0$ be given by Theorem~\ref{thm:kevin} and let $\alpha$ be such that
        $$(\log(n))^{-\delta} (\log \log(n))^{3/2+\delta}\leq \alpha \leq2^{-7},$$
        Set $y = n^{\frac{1}{2}-\beta(\alpha)}$ and $z = n^{\frac{1}{2}+\beta(\alpha)}$.
        First, let us show that $\gamma \alpha n \le |H(n,(y,z))| \le \gamma^{-1}\alpha n$.
		Note that $2y \le z \le y^2$ if and only if $\sqrt{2} \le n^{\beta(\alpha)} \le n^{1/6}$, which are satisfied by our choice of $\alpha$.
		Moreover, we have $100 \le y \le z-1$ and $y \le \sqrt{n}$, and hence we can apply Theorem~\ref{thm:kevin}.
		Set
		\[u = \dfrac{\log (z)}{\log (y)} -1 = \dfrac{\log(z/y)}{\log (y)} = \dfrac{4\beta(\alpha)}{1-2\beta(\alpha)} = 4f(\alpha).\]
	   The upper and lower bounds on $|H(n,(y,z))|$ then follow from Theorem~\ref{thm:kevin} and the definition of $f(\alpha)$.

         We start by proving item $(i)$.
	   Set
		\begin{align*}
			C_n:= \left[n^{1-2\beta(\alpha)},n\right]\setminus H(n,(y,z)).
		\end{align*}
        Now, we claim that $n^{-2\beta(\alpha)} \le \gamma^{-1} \alpha$.
        In fact, as $f(\alpha)<1$, we have $\beta(\alpha) \ge f(\alpha)/2.$
        Moreover, as $\alpha \ge (\log n)^{-\delta}(\log \log n)^{\frac{3}{2}+\delta}$, it follows from the lower bound on $f(\alpha)$ in Claim~\ref{eq:growth-of-f} that 
        \[ \beta(\alpha) \ge \dfrac{\alpha^{\frac{1}{\delta}}}{2(\log(2\alpha^{-\frac{1}{\delta}}))^{\frac{3}{2\delta}}} \ge \dfrac{(\log (n))^{-1}(\log \log (n))^{\frac{3}{2\delta}+1}}{2(\log(2\alpha^{-\frac{1}{\delta}}))^{\frac{3}{2\delta}}} \ge \dfrac{\log \log (n)}{4\log (n)}.\]
        In the last inequality, we actually only used that $\alpha \ge (\log n)^{-\delta}$ and that $\log (2\log (n)) \le 2 \log \log (n)$. Therefore,
        \[ n^{-2\beta(\alpha)} \le (\log n)^{-1/2} \ll (\log n)^{-\delta}(\log \log n)^{\frac{3}{2}+\delta} \le \alpha.\]
        This proves our claim. 
        Since $n^{1-2\beta(\alpha)} \le \gamma^{-1} \alpha n$, it follows that $|C_n| \ge (1-2\gamma^{-1} \alpha)n$, which is void unless $\alpha < \tfrac 12  \gamma$.


Now, let $2 \le a \le b \le c \le n$ be such that $ab=c$ and suppose that $\{a,b,c\} \cap C_n \not = \emptyset$.
We claim that we must have $a \le n^{\frac{1}{2}-\beta(\alpha)}$.
Indeed, if $\{a,b\} \cap C_n \not = \emptyset$, then this implies that
$b \ge n^{1-2\beta(\alpha)}$, and hence $a \le n^{2\beta(\alpha)}  \le n^{\frac{1}{2}-\beta(\alpha)}$ (as $\beta(\alpha) \le 1/6$ by our choice of $\alpha$).
If $c \in C_n$, then $c \notin H(n,(y,z))$, and hence both $a$ and $b$ do not belong to the interval $(n^{\frac{1}{2}-\beta(\alpha)},n^{\frac{1}{2}+\beta(\alpha)})$.
This implies that $a \le n^{\frac{1}{2}-\beta(\alpha)}$, otherwise we would have $ab > n$.

		As $p$ is much smaller than the threshold for $[2,n]_p$ to contain a product Schur triple, if $C_n \cup [2,n]_p$ contains a product Schur triple, then $[2,n]_p$ contains an element in $[n^{\frac{1}{2}-\beta(\alpha)}]$.
		Thus, if $p \ll n^{-\frac{1}{2}+\beta(\alpha)}$, then
		\[\Pr{C_n \cup [2,n]_p \text{ contains a product Schur triple}} \le \Pr{\big| \big[n^{\frac{1}{2}-\beta(\alpha)}]_p\big|\ge 1}\to 0.\]
		
		For item $(ii)$,
        let $(C_n)_{n \in \mathbb{N}}$ be any sequence such that $|C_n| \ge (1-\tfrac 12 \gamma \alpha)n$.
        By monotonicity, we may assume that $p \ll 1$.
		Then, we have that the set $C'_n := C_n \cap H(n,(y,z))$ has size at least 
        \begin{align*}
            |C'_n| \ge |C_n| + |H(n,(y,z))| - n \ge (1-\tfrac 12 \gamma \alpha)n + \gamma \alpha n - n \ge \tfrac{1}{2} \gamma \alpha n.
        \end{align*}
		Now, let $G$ be a graph with vertex set $[2,n^{\frac{1}{2}+\beta(\alpha)}]$ and edge set $E(G)=\{\{a,b\}: ab \in C'_n\}$. Let $d$ be the average degree of $G$, that is, $d = 2e(G)/v(G)$ and
		set $X = \{v \in V(G): d(v)>d/2\}$.
		Note that
		\[|X|v(G) + v(G)d/2 \ge |X|v(G)+(v(G)-|X|)d/2 \ge d v(G).\]
		This implies that $|X| \ge d/2$. As $e(G)\ge |C'_n|$, it follows that $|X| \ge \gamma \alpha n^{\frac{1}{2}-\beta(\alpha)}/2$.
		
        As containing a product Schur triple is a monotone property, we can assume that $p= f(n) \alpha^{-1} n^{-\frac{1}{2}+\beta(\alpha)}$, where $f(n) \rightarrow \infty$, but $f(n) \le \log (n)$.
        Let $q \in [0,1]$ be such that $(1-q)^2 = 1-p$; note that, as $p \ll 1$, we have that $q$ is asymptotically equal to $p/2$.
        Let $A := [2,n]_{q}$ and $B := [2,n]_{q}$ to be two independent random sets; observe that $A \cup B$ has the same distribution as $[2,n]_p$.
		Now, note that as $p \gg \alpha^{-1} n^{-\frac{1}{2}+\beta(\alpha)}$, we have
		\begin{align}
			\Pr{A \cap X = \emptyset} = e^{-\Omega(|X|p)} = o(1).
		\end{align}
		Therefore, with high probability, we have at least one vertex $v \in A \cap X$.
		Now, in order to have an edge captured by $A \cap B$, it suffices to have $B \cap N(v) \neq \emptyset$.
		As $|N(v)| \ge \gamma \alpha n^{\frac{1}{2}-\beta(\alpha)}/2$ for all $v \in X$, it follows that
		\begin{align}
			\Pr{B \cap N(v) = \emptyset} = e^{-\Omega(|N(v)|p)} = o(1).
		\end{align}
		Therefore, with high probability there exists $\{a,b\} \se [2,n]_p$ such that $ab \in C'_n$. This concludes our proof.
	\end{proof}
	
	\section{Concluding remarks}\label{sec:remarks}

In the deterministic setting, we introduced some new definitions to bridge known results of the sum-free case in our setting.
In particular, following Abbot and Hanson \cite{abbott1972problem} definition of strongly sum-free sets, we re-introduce double-sum Schur numbers $S'(k)$, which we showed to be related to the construction of large product-free sets. However, we did not focus on determining bounds for $S'(k)$, as the problem seems reminiscent of finding bounds for $S(k)$, which proved to be difficult. Still, the following question might be approachable.
	
\begin{problem}
Is there an $\epsilon>0$ such that for $k$ large enough we have $S'(k)<(1-\epsilon) S(k)$?
\end{problem}

For the case $k=2$ we try to determine the minimum number of monochromatic products guaranteed in any $2$-colourings of $[n]$ and show in Theorem \ref{thm:det_prod_mult} a lower bound of $n^{\frac{1}{3}-\epsilon}$. 
In particular, we are interested in the following question originally asked by Prendiville \cite{Prendiville} that partially motivated our work.

\begin{problem}
\label{prob:Prendiville}
For $k$ a positive integer, what is the minimal number of monochromatic products in a $k$-colouring of $[2,n]$ and how does this colouring look?
\end{problem}

In a paper synchronised with ours, Aragão, Chapman, Ortega and Souza~\cite{ACOS23} improve our result to $\left(2^{-3/2}-o(1)\right)n^{1/2} \log (n)$ even in a single colour class, which is tight up to lower order terms.
They also have results for larger $k$, which are tight up to a log factor when $k=3, 4$.

In the probabilistic setting, we analysed the probability threshold of the property of containing a product Schur triple.
However, this question can be extended to multiple colours.
In particular, we propose the following problem, which is already interesting in the case $k=2$.

\begin{problem}
For $k$ a fixed positive integer, what is the threshold in $[2,n]_p$ for the property that any $k$-colouring contains a monochromatic product?
\end{problem}

For any of the problems studied in the sum-free case, we can consider an equivalent question in the product-free setting. We hope that this line of questioning can bring a new perspective to the study of Schur triples and other equations.

 \section*{Acknowledgements}\label{sec:acknowledgements} 
    \noindent This project began when D.M.C.~visited the other two authors at Freie Universität Berlin.
    We express our gratitude to Freie Universität Berlin for their warm hospitality and the London Mathematical Society for its financial support that allowed the visit to take place.
    We thank Sean Prendiville for presenting the problem of counting monochromatic product Schur triples in the integers at the British Combinatorial Conference 2022.
    Finally, we thank both referees for their careful reading and many comments which greatly improved the presentation of this paper.

 \bibliographystyle{abbrv}
	
\small{\bibliography{prod-free}}

\end{document}